\documentclass[12pt,reqno]{article}

\usepackage[T1]{fontenc}
\usepackage[utf8]{inputenc}
\usepackage{lmodern}
\usepackage{authblk}
\usepackage{amsmath,amssymb,amsthm,mathtools}
\usepackage{enumitem}
\usepackage{booktabs}
\usepackage{array}
\usepackage{hyperref}
\usepackage{microtype}
\hypersetup{hidelinks}

\newtheorem{theorem}{Theorem}[section]
\newtheorem{lemma}[theorem]{Lemma}

\newtheorem{maintheorem}{Theorem}

\DeclareMathOperator{\Aut}{Aut}

\DeclareMathOperator{\GL}{GL}

\DeclareMathOperator{\PSL}{PSL}

\title{A rigidity theorem for skew braces with multiplicative group \(S\times T\)}
\author{
Marco Damele\thanks{The author was supported by INdAM and GNSAGA -- Gruppo Nazionale per le Strutture Algebriche, Geometriche e le loro Applicazioni, and by ProBiki, funded by Fondazione di Sardegna.}
}

\affil{
Dipartimento di Matematica, Università degli Studi di Cagliari,
Via Ospedale 72, 09124 Cagliari, Italy \\
\texttt{marco.damele@unica.it}
}
\date{}
\begin{document}
\maketitle
\vspace{-2.0 em}

\begin{abstract}
We prove that if \(B\) is a finite skew brace with
\((B,\cdot)\cong S\times T\), where \(S\) and \(T\) are non-abelian simple
groups, then \((B,+)\) is not supersolvable.
\end{abstract}

\noindent\textbf{2020 Mathematics Subject Classification.}
16T25, 20D10, 20D20, 20B35.

\medskip

\noindent\textbf{Keywords.}
Skew braces; supersolvable groups; non-abelian simple groups; Yang--Baxter equation.

\medskip

\section{Introduction}

Skew braces were introduced by Guarnieri and Vendramin in \cite{17} as a
group-theoretic framework for the study of set-theoretic solutions of the
Yang--Baxter equation. A \emph{skew brace} is a triple $(B,+,\cdot)$
such that \((B,+)\) and \((B,\cdot)\) are groups and
\[
a\cdot (b+c)=a\cdot b-a+a\cdot c
\qquad\text{for all } a,b,c\in B.
\]
The groups \((B,+)\) and \((B,\cdot)\) are called, respectively, the
\emph{additive group} and the \emph{multiplicative group} of the skew
brace.

A central problem in the theory is to understand the relation between
these two groups. More precisely, one would like to determine which pairs
of groups can occur as the additive and multiplicative groups of the same
skew brace, and how structural properties of one group impose restrictions
on the other. In this direction, Smoktunowicz and Vendramin proved in
\cite{27} that if \(B\) is finite and \((B,+)\) is nilpotent, then
\((B,\cdot)\) is solvable. This result is related to a question of Byott,
originally arising in the context of Hopf--Galois theory
\cite{byott2014solubilitycriteriahopfgaloisstructures}, and later
formulated by Vendramin in terms of skew braces \cite{33}: if \(B\) is a
finite skew brace and \((B,+)\) is solvable, must \((B,\cdot)\) also be
solvable? This is often referred to as the Byott--Vendramin conjecture. Several partial results are known. For instance, the conjecture has been
verified for various classes of finite skew braces; see, among others,
\cite{16,24,9,21, 11, DameleZGroup, 30}. Byott also studied the case in which the
multiplicative group is a non-abelian simple group \cite{8} and later Tsang in \cite{31} studied the case where $(B,\cdot)$ is quasisimple. In this paper
we consider the next natural situation, namely skew braces whose
multiplicative group is a direct product of two non-abelian simple groups.

Our main result is the following rigidity theorem.

\begin{maintheorem}\label{ThA}
Let \(B\) be a finite skew brace such that
$(B,\cdot)\cong S\times T,$ where \(S\) and \(T\) are finite non-abelian simple groups. Then
$(B,+)$ is not supersolvable.
\end{maintheorem}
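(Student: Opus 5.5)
The plan is to use the standard dictionary between skew braces and regular subgroups of holomorphs, and to turn the supersolvability of \(A=(B,+)\) into a rigid chain of left ideals. The structure of \(G=(B,\cdot)\cong S\times T\) will then be forced into a Hall-subgroup configuration that the classification rules out. Two steps at the end are not fully settled, as noted below.

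\emph{Step 1 (reduction).} Let \(\lambda\colon G\to\Aut(A)\), \(\lambda_a(b)=-a+a\cdot b\). Its kernel \(K\) is a normal subgroup of \(G\), hence one of \(1,S,T,G\), and on \(K\) the two operations coincide, so \((K,+)\cong(K,\cdot)\). If \(K\neq 1\), then \(A\) contains a subgroup isomorphic to a non-abelian simple group. This is impossible because \(A\) is supersolvable. Hence \(\lambda\) is injective. Injectivity is recorded for Step 4; the argument of Steps 2 and 3 does not use it.

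\emph{Step 2 (Sylow tower of left ideals).} Let \(p_1>p_2>\dots>p_r=2\) be the primes dividing \(|A|=|G|\). Since \(A\) is supersolvable, it has a Sylow tower, and each subgroup
\[
N_k=O_{\{p_1,\dots,p_k\}}(A)
\]
is normal of order \(|A|_{p_1}\cdots|A|_{p_k}\). Being characteristic, each \(N_k\) is \(\lambda\)-invariant, so it is a left ideal. In particular \(N_k\) is a subgroup of \((B,\cdot)\) of the same order. Thus \(G\) possesses Hall \(\{p_1,\dots,p_k\}\)-subgroups \(H_k=(N_k,\cdot)\) for every \(k\). Each \(H_k\) has nilpotent-by-supersolvable, in fact supersolvable, additive group, so the solvability results quoted in the introduction, or simply Feit--Thompson for \(k=r-1\), show that \(H_{r-1}\) is solvable of odd order.

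\emph{Step 3 (descent to the factors).} Hall \(\pi\)-subgroups of \(S\times T\) are products of Hall \(\pi\)-subgroups of \(S\) and of \(T\). Hence each of \(S\) and \(T\) has a Hall \(2'\)-subgroup, that is, a subgroup of index exactly \(|S|_2\), respectively \(|T|_2\). By Guralnick's classification of subgroups of prime-power index in simple groups, together with the check that the index equals the full \(2\)-part, this forces
\[
S,T\cong\PSL(2,q),\qquad q=2^k-1 \text{ a Mersenne prime,}
\]
with Hall \(2'\)-subgroup the Frobenius group \(C_q\rtimes C_{(q-1)/2}\). The cases \(A_n\), \(\PSL(n,q)\) with \(n\ge3\), \(\PSL(2,11)\), \(M_{11}\), \(M_{23}\) and \(\mathrm{PSU}(4,2)\) are excluded by comparing the index with \(|S|_2\).

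\emph{Step 4 (the residual case; main obstacle).} For the residual case the Hall tower alone gives no contradiction, since \(\PSL(2,q)\) with \(q\) a Mersenne prime has all the required Hall subgroups. Here I would exploit more of the structure of \(\lambda\). Let \(P=N_1\) be the normal Sylow \(p_1\)-subgroup of \(A\), where \(p_1\) is now the largest prime dividing \(q(q^2-1)\). The group \(\lambda(G)\) acts faithfully on \(A\) and stabilizes the characteristic series \(N_1<N_2<\dots\). Because \(A\) is supersolvable, every \(\lambda(G)\)-chief factor inside this series is a section of \(A\) whose automorphisms induced by \(A\) are abelian. The perfect group \(G\) must therefore act on the cyclic pieces of a refined characteristic chief series through \(\GL\) of small elementary abelian groups, with \(P\) of rank at most \(2\) because \(|G|_{p_1}=p_1^2\).

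The first thing I would try is to show that \(G\) acts trivially on \(P/\Phi(P)\), and hence on \(P\); acting trivially on \(P/\Phi(P)\) forces \(G\) to act on \(P\) through a \(p_1\)-group, which is trivial since \(G\) is perfect. The natural route is to note that \(\GL(2,p_1)\) contains no copy of \(\PSL(2,q)\) with \(p_1\) the largest prime, except when \(p_1=q\), and \(\PSL(2,q)\) does not embed in \(\GL(2,q)\) for \(q\) odd. If \(G\) acts trivially on \(P\), then \(P\) lies in the fixed points of \(\lambda(G)\), and the left-ideal and fixed-point structure would make \(P\) an ideal. Passing to \(B/P\) and inducting on \(|B|\) would then give a contradiction, since \(G\) has no normal subgroup of order \(p_1^2\). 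This step, controlling the action of \(\lambda(G)\) on \(P\) and showing that \(P\) is an ideal, is the one I expect to be hardest. It may require case-checking the small groups \(\PSL(2,7)\) and \(\PSL(2,31)\) separately.
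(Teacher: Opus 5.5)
Steps 1--3 are correct and essentially reproduce the paper's opening. The characteristic Hall $2'$-subgroup of $(B,+)$ is a left ideal, hence a $2$-complement of $S\times T$, which forces $S\cong\PSL_2(q)$ and $T\cong\PSL_2(q')$ with $q+1$ and $q'+1$ powers of $2$. Your argument that $\lambda(G)$ fixes the normal Sylow $p_1$-subgroup $P$ pointwise is also sound, once you note that the image of $G$ is a quotient of $S\times T$. That image lies in $\mathrm{SL}_2(p_1)$, whose only involution is central, so it contains no non-abelian simple group and is trivial on $P/\Phi(P)$, hence (Burnside) on $P$.

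\textbf{The gap is the endgame.} Pointwise fixedness gives $b\cdot a=b+a$ for $a\in P$, but says nothing about conjugation in $(B,\cdot)$. With $\bar b$ the multiplicative inverse, $b\cdot a\cdot\bar b=b+a+\lambda_b\lambda_a(\bar b)$. This would equal $b+a-b\in P$ if $\lambda_a=\mathrm{id}$, but your own Step 1 gives $\lambda_a\neq\mathrm{id}$ for $a\neq 0$. So ``$P$ is an ideal'' is unsupported. It is also itself the contradiction, since $S\times T$ has no nontrivial normal subgroup of prime-power order. The induction on $B/P$ is therefore beside the point, and all the content sits in the unproved claim.

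The missing idea is to run the same trivial-action argument in the opposite brace $B^{\mathrm{opp}}$, whose additive group is again supersolvable with the same characteristic $P$. This yields $b\cdot a=a+b$ as well, hence $P\le Z(B,+)$. An index argument then finishes. A central Sylow subgroup splits off, $(B,+)=P\times Q$ with $Q$ characteristic, so $(Q,\cdot)$ is a subgroup of index $|P|$ in $S\times T$. The paper packages this as $R\le(B,+)'$ (no subgroup of index $r$) together with $R\cap(B,+)'\cap Z(B,+)=1$ for an abelian Sylow $R$. Since $\PSL_2(q)$ has no subgroup of index $q$ for $q\ge 13$, this disposes of every case with $\max(q,q')\ge 31$.

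It cannot dispose of $S\cong T\cong\PSL_2(7)$, which your sketch never isolates. There $p_1=7$ and $|P|=49$, but $\PSL_2(7)\times\PSL_2(7)$ contains $S_4\times S_4$ of index $49$, so the largest prime gives no contradiction. The paper switches to the prime $3$. Since $\PSL_2(7)^n$ has no subgroup of index $3^m$, one gets $3\nmid[(B,+):(B,+)']$. So the Sylow $3$-subgroup $P_3$ lies in the nilpotent group $(B,+)'$ and is normal of order $9$ with solvable automorphism group. Perfectness of $(B,\cdot)$ then forces $P_3\le Z(B,+)$, contradicting the same transfer fact. The paper also uses Byott's theorem to obtain a $\PSL_2(7)$ section and reduce to $S\cong\PSL_2(7)$. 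Your route could bypass that via the index argument above, but only after the $\PSL_2(7)^2$ case is handled separately.

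Minor point: $|G|_{p_1}=p_1^2$ only when $q=q'$; otherwise $|P|=p_1$.
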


The proof combines several ingredients. First, assuming that \((B,+)\) is
supersolvable, one obtains a characteristic Hall subgroup of odd order in
the additive group, which becomes a \(2\)-complement in the multiplicative
group. This forces each simple factor \(S\) and \(T\) to have a subgroup of
index a power of \(2\). By the classification result used in \cite{16},
the factors are then reduced to groups of projective linear type. A further
result of Byott implies that the multiplicative group has a section
isomorphic to \(\operatorname{PSL}_2(7)\). Using a simple lemma on sections
of direct products, this section must come from one of the two simple
factors. This reduces the possible multiplicative group to
\[
\operatorname{PSL}_2(7)\times \operatorname{PSL}_2(p),
\]
with \(p+1\) a power of \(2\). The remaining cases are excluded by
analyzing suitable Sylow subgroups of the additive group and the induced
lambda action.

The paper is organized as follows. In Section~\ref{Preliminaries} we
collect the preliminary results needed in the proof. In
Section~\ref{sec:thmA} we prove the main theorem.

\section{Preliminaries}\label{Preliminaries}

In this section we collect the few facts that will be used in the proof of
the main theorem. 
Let \(B=(B,+,\cdot)\) be a skew brace. Recall that, for every \(b\in B\),
the map
\[
\lambda_b:(B,+)\longrightarrow (B,+),
\qquad
a\mapsto -b+b\cdot a,
\]
is an automorphism of the group \((B,+)\). Moreover, the assignment
\[
\lambda:(B,\cdot)\longrightarrow \Aut(B,+),
\qquad
b\mapsto \lambda_b,
\]
is a group homomorphism. A subgroup \(I\) of \((B,+)\) is called a \emph{left ideal} of \(B\) if it
is normal in \((B,+)\) and $\lambda_b(I)\leq I$ for every $b \in B.$ In particular, every characteristic subgroup of \((B,+)\) is a left ideal.
If \(I\) is a left ideal, then \((I,\cdot)\) is a subgroup of
\((B,\cdot)\). Given any skew brace $B$, one can replace the additive group $(B,+)$ by its \emph{opposite group} to obtain a new skew brace 
\[
B^{\mathrm{opp}} = (B,+^{\mathrm{opp}},\cdot),
\]
called the \emph{opposite skew brace} of $B$ (see \cite[Proposition 3.1]{koch2019oppositeskewleftbraces}).

We shall also use the following standard terminology. A \emph{section} of
a group \(G\) is a quotient \(M/N\), where \(N\trianglelefteq M\leq G\).

\begin{lemma}\label{subquotient}
Let \(S,T\) be finite groups and suppose that \(S\times T\) has a section
isomorphic to a non-abelian simple group \(L\). Then \(L\) is isomorphic
to a section of \(S\) or to a section of \(T\).
\end{lemma}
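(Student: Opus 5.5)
Write the section as $L\cong M/N$ with $N\trianglelefteq M\le S\times T$, and let $\pi_S:S\times T\to S$ and $\pi_T:S\times T\to T$ be the projections. The plan is to consider $K=M\cap T$, where $T$ is identified with $1\times T$. Since $T\trianglelefteq S\times T$, $K$ is normal in $M$. Hence $KN/N$ is a normal subgroup of the simple group $M/N$, so either $KN/N=1$ or $KN/N=M/N$. This gives two cases.

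\emph{Case $KN=M$.} Then
\[
L\cong M/N=KN/N\cong K/(K\cap N).
\]
Here $K\cap N\trianglelefteq K\le T$, so $L$ is a section of $T$.

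\emph{Case $K\le N$.} Note that $K=\ker(\pi_S|_M)$, so $\pi_S(M)\cong M/K$. Since $K\le N$, we get
\[
L\cong M/N\cong (M/K)/(N/K)\cong \pi_S(M)/\pi_S(N).
\]
Moreover $\pi_S(N)\trianglelefteq\pi_S(M)\le S$, because the image of a normal subgroup under a surjective homomorphism is normal. Hence $L$ is a section of $S$.

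Only the simplicity of $L$ is used, not non-abelianness, so the same argument would work for any simple $L$. I do not expect a genuine obstacle. The only points needing care are that $M\cap T$ is normal in $M$, and the isomorphism-theorem bookkeeping in the second case, namely identifying $M/K$ with $\pi_S(M)$ and $N/K$ with $\pi_S(N)$.
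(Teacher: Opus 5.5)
Your proof is correct and is essentially the paper's argument: intersect $M$ with a direct factor, then use simplicity of $M/N$ to split into the case $KN=M$ (second isomorphism theorem, giving a section of $T$) and the case $K\le N$ (where $K=\ker(\pi_S|_M)$ gives $M/N\cong\pi_S(M)/\pi_S(N)$, a section of $S$). The paper also introduces $M\cap(S\times 1)$ and uses non-abelianness to organize its cases, whereas you work only with $M\cap(1\times T)$, which shows that neither is needed and that the lemma holds for any simple $L$.
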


\begin{proof}
Let \(M\leq S\times T\) and \(N\trianglelefteq M\) be such that $M/N\cong L.$ Set
\[
K_S=M\cap (S\times 1),
\qquad
K_T=M\cap (1\times T).
\]
Since \(K_S\trianglelefteq M\) and \(K_T\trianglelefteq M\) we have that \(K_SN/N\) and \(K_TN/N\) are normal subgroups of \(M/N\). Since
\(M/N\cong L\) and \(L\) is simple, each of these two normal subgroups is either
trivial or equal to \(M/N\). Moreover, the subgroups \(K_S\) and \(K_T\) commute elementwise, because
\(K_S\leq S\times 1\) and \(K_T\leq 1\times T\). Hence their images
\(K_SN/N\) and \(K_TN/N\) commute elementwise in \(M/N\). Since \(L\) is
non-abelian, these two images cannot both be equal to \(M/N\). Assume first that $K_SN/N=M/N.$ Then \(M=K_SN\). By the second isomorphism theorem, we obtain
\[
M/N=K_SN/N\cong K_S/(K_S\cap N).
\]
Since \(K_S\leq S\times 1\cong S\), the quotient \(K_S/(K_S\cap N)\) is a
section of \(S\). Thus \(L\) is isomorphic to a section of \(S\). Similarly, if $K_TN/N=M/N,$
then \(M=K_TN\), and again by the second isomorphism theorem,
\[
M/N=K_TN/N\cong K_T/(K_T\cap N).
\]
Since \(K_T\leq 1\times T\cong T\), this shows that \(L\) is isomorphic to a
section of \(T\). It remains to consider the case $K_SN/N=1$ and $K_TN/N=1.$
Equivalently, $K_S\leq N$ and $K_T\leq N.$
Let $\pi_S:S\times T\to S$ and $\pi_T:S\times T\to T$
be the two canonical projections. We restrict them to \(M\). Then
\[
\ker(\pi_S|_M)=M\cap (1\times T)=K_T\leq N,
\]
and
\[
\ker(\pi_T|_M)=M\cap (S\times 1)=K_S\leq N.
\]

We claim that \(\pi_S|_M\) induces an isomorphism
\[
M/N\cong \pi_S(M)/\pi_S(N).
\]
Indeed, since \(\pi_S(N)\trianglelefteq \pi_S(M)\), the map
\[
\overline{\pi}_S:M/N\longrightarrow \pi_S(M)/\pi_S(N),
\qquad
mN\longmapsto \pi_S(m)\pi_S(N),
\]
is well defined and surjective. Its kernel is
\[
\ker \overline{\pi}_S
=
\{\,mN\in M/N : \pi_S(m)\in \pi_S(N)\,\}.
\]
If \(mN\in \ker\overline{\pi}_S\), then there exists \(n\in N\) such that $\pi_S(m)=\pi_S(n).$
Hence $mn^{-1}\in \ker(\pi_S|_M)=K_T\leq N.$
Since \(n\in N\), it follows that \(m\in N\). Therefore $\ker\overline{\pi}_S=1.$
Thus
\[
M/N\cong \pi_S(M)/\pi_S(N),
\]
which is a section of \(S\). Analogously, using the projection \(\pi_T\), one obtains
\[
M/N\cong \pi_T(M)/\pi_T(N),
\]
which is a section of \(T\). Therefore, also in this last case, \(L\) is isomorphic to a section of \(S\)
and to a section of \(T\). Hence, in all cases, \(L\) is isomorphic to a
section of \(S\) or to a section of \(T\), as required.
\end{proof}
The following result will be used to force the occurrence of
\(\operatorname{PSL}_{2}(7)\).

\begin{lemma}\label{L:Byott-section}
Let \(B\) be a finite skew brace such that \((B,+)\) is solvable and
\((B,\cdot)\) is not solvable. Then \((B,\cdot)\) has a section isomorphic
to \(\operatorname{PSL}_{2}(7)\).
\end{lemma}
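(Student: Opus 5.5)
We would derive Lemma~\ref{L:Byott-section} by combining Byott's minimal-counterexample results with the classical list of minimal simple groups. Throughout we use only the defining identity of a skew brace and the facts already recalled: left ideals are normal in \((B,+)\), and for a left ideal \(I\) the set \((I,\cdot)\) is a subgroup of \((B,\cdot)\).

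\emph{Step 1: reduce to a minimal counterexample.} We argue by induction on \(|B|\). Suppose \((B,+)\) is solvable and \((B,\cdot)\) is not solvable.

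If \(B\) contains a proper nontrivial ideal \(I\), then the skew braces \(I\) and \(B/I\) both have solvable additive groups. Since \((B,\cdot)\) is an extension of \((I,\cdot)\) by \((B/I,\cdot)\), one of these two multiplicative groups is non-solvable. By induction it has a section isomorphic to \(\operatorname{PSL}_2(7)\). A section of a subgroup or of a quotient of \((B,\cdot)\) is again a section of \((B,\cdot)\), so we are done in this case.

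Likewise, if some proper left ideal or sub-skew brace \(C\) has \((C,\cdot)\) non-solvable, induction applies directly to \(C\).

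\emph{Step 2: pass to a minimal simple group.} We may therefore assume every proper sub-skew brace has solvable multiplicative group. Since \((B,\cdot)\) is non-solvable, it contains a minimal non-solvable subgroup. Its quotient by the solvable radical is a minimal simple group. By Thompson's classification, this is one of
\[
\operatorname{PSL}_2(2^p),\quad \operatorname{PSL}_2(3^p),\quad \operatorname{PSL}_2(p),\quad \operatorname{Sz}(2^p),\quad \operatorname{PSL}_3(3),
\]
for suitable primes \(p\).

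\emph{Step 3: force \(\operatorname{PSL}_2(7)\).} This is where Byott's theorem enters, and it is the main obstacle. Byott showed that a minimal counterexample to the solvable-additive/solvable-multiplicative implication must have multiplicative group involving \(\operatorname{PSL}_2(7)\). Equivalently, in the Hopf--Galois language: if \(G\) is insoluble and admits a Hopf--Galois structure of soluble type, then some composition factor of \(G\) is one of the groups on a short list, and each group on that list contains \(\operatorname{PSL}_2(7)\) as a section.

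The underlying mechanism is an order argument. A solvable \((B,+)\) has Hall subgroups for every set of primes, and characteristic Hall subgroups give left ideals, hence subgroups of \((B,\cdot)\) of the corresponding order. Consequently every simple section of \((B,\cdot)\) must have subgroups of suitable prime-power index. Guralnick's classification of simple groups with a subgroup of prime-power index then leaves only \(\operatorname{PSL}_2(q)\) with \(q+1\) a power of \(2\) (among a few others). In each surviving case one exhibits \(\operatorname{PSL}_2(7)\) as a section.

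The fully rigorous version of Step 3 depends on the classification of finite simple groups. We would therefore invoke Byott's result from \cite{byott2014solubilitycriteriahopfgaloisstructures} (see also \cite{8}) directly, translated into skew brace language via the standard correspondence between skew braces and regular subgroups of the holomorph, rather than reprove it.
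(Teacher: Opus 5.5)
Your argument is, in substance, the paper's: both rest entirely on citing Byott's theorem. The paper does this through the consequence recorded in \cite[Corollary~3.3]{9}. Your Steps~1--2 are never used; in particular, the minimal simple section produced in Step~2 is not tied to the brace structure at all.

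You should not, however, present the ``underlying mechanism'' paragraph as justification, because it is wrong and could not replace the citation.
\begin{itemize}
\item Hall subgroups of a solvable group are in general only unique up to conjugacy, not characteristic, so they do not give left ideals. What does give subgroups of prime-power index in \((B,\cdot)\) are characteristic subgroups \(K\) of \((B,+)\) with \((B,+)/K\) elementary abelian.
\item No argument based only on prime-power indices can force \(\PSL_2(7)\). For example, \(\PSL_2(31)\) is on your own list of survivors: its Borel subgroup is solvable of index \(32\), and it admits a chain \(\PSL_2(31)>31{:}15>31{:}5>C_{31}>1\) with prime-power indices. Yet \(|\PSL_2(31)|=2^5\cdot 3\cdot 5\cdot 31\) is prime to \(7\).
\item Likewise \(A_5\cong\PSL_2(4)\) has the solvable subgroup \(A_4\) of index \(5\), but \(168\nmid 60\).
\end{itemize}
So the sentence ``in each surviving case one exhibits \(\PSL_2(7)\) as a section'' is false. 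This is exactly why the paper needs Lemma~\ref{L:Byott-section} in the proof of Theorem~\ref{ThA}: it cuts \(\PSL_2(p)\) with \(p+1=2^t\) down to \(p=7\), which the index condition alone cannot do.
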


\begin{proof}
This is the consequence of Byott's result used in
\cite[Corollary~3.3]{9}.
\end{proof}

We shall need the following elementary observation.

\begin{lemma} \label{index}
Let $n\in \mathbb N_{>0}$. Then $G=\operatorname{PSL}_{2}(7)^n$ has no subgroup of index $3^m$ with $1\leq m\leq n$.
\end{lemma}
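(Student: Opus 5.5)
The plan is to argue by induction on \(n\), projecting away one simple factor at a time. We reduce to the single fact that \(L=\operatorname{PSL}_2(7)\) has no subgroup of index \(3\). In fact the argument should show that \(L^n\) has no proper subgroup whose index is a power of \(3\); the restriction \(1\le m\le n\) is then automatic, since \(|L^n|_3=3^n\).

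First, the base fact. Suppose \(K\le L\) had index \(3\). The action of \(L\) on the cosets of \(K\) gives a homomorphism \(L\to S_3\) whose image is transitive, hence nontrivial. Since \(L\) is simple, this map would be injective. That is impossible because \(|L|=168>6\). Also \(|L|=168=2^3\cdot 3\cdot 7\), so any subgroup of \(L\) whose index is a power of \(3\) has index \(1\) or \(3\), hence index \(1\).

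For the inductive step, let \(H\le G=L^n\) with \(|G:H|=3^m\), and write \(G=L^{n-1}\times L\). Let \(\pi:G\to L^{n-1}\) be the projection, with kernel \(1\times L\), and put \(K=H\cap(1\times L)\). Then
\[
|G:H|=|L^{n-1}:\pi(H)|\cdot|1\times L:K|,
\]
which follows from \(|H|=|\pi(H)|\,|K|\). Both factors divide \(3^m\), so \(|L:K|\) is a power of \(3\) and hence equals \(1\) by the base fact. Therefore \(|L^{n-1}:\pi(H)|=3^m\). Since \(m\ge 1\) and \(3^m\) divides \(|L^{n-1}|\), we get \(1\le m\le n-1\), and the inductive hypothesis gives a contradiction. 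For \(n=1\) the statement is just the base fact.

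There is no serious obstacle here. The only point needing care is the index factorization through the projection, \(|G:H|=|\pi(G):\pi(H)|\cdot|\ker\pi:\ker\pi\cap H|\), which is a routine order count.
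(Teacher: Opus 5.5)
Your proof is correct and uses the same key fact as the paper: for a direct factor $N\cong\operatorname{PSL}_2(7)$ of $G$, the index $[N:N\cap H]$ divides $[G:H]$, so it must equal $1$ (your projection formula is $[G:H]=[G:NH]\,[NH:H]$ with $[NH:H]=[N:N\cap H]$). The paper applies this directly to a factor $G_j\nleq H$, which must exist when $H<G$, instead of inducting on $n$; it also just asserts that $\operatorname{PSL}_2(7)$ has no subgroup of nontrivial $3$-power index, which you justify via the coset action into $S_3$.
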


\begin{proof}
Write $G=G_1\times \cdots \times G_n$ where $G_i\cong \operatorname{PSL}_{2}(7)$ 
for each $i$. Let $H\leq G$ be a subgroup such that $[G:H]=3^m$. If $G_i\leq H$ for every $i$, then
\[
G=\langle G_1,\dots,G_n\rangle\leq H,
\]
and hence $H=G$, a contradiction. Therefore $G_j\nleq H$ for some $j$. It follows that
$G_j\cap H < G_j,$
so $[G_j:G_j\cap H]>1$. On the other hand, $[G_j:G_j\cap H]=[G_jH:H]$
so this index divides $[G:H]=3^m$. Hence $[G_j:G_j\cap H]$ is a non-trivial power of $3$. This is impossible, since $\operatorname{PSL}_{2}(7)$ has no subgroups of index equal to a non-trivial power of $3$. Therefore no such subgroup $H$ exists.
\end{proof}

Finally, we record the comparison result between the derived series of the
additive and multiplicative groups.

\begin{lemma} \label{Key}
Let $(B,+,\cdot)$ be a finite skew brace such that $(B,+)$ is solvable, and let $H$ be a characteristic subgroup of $(B,+)$. If $\Aut((B,+)/H)$ is solvable, then there exists $k\in \mathbb N_{>0}$ such that
\[
(B,\cdot)^{(k)}\leq H.
\]
\end{lemma}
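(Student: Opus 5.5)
The plan is to use the fact that \(H\) is a left ideal to pass to a quotient skew brace and then argue by induction on the derived length. Since \(H\) is characteristic in \((B,+)\), it is a left ideal, but it need not be an ideal, so the quotient \(B/H\) need not be a skew brace. Instead I will work directly with the action
\[
\bar\lambda:(B,\cdot)\longrightarrow \Aut\bigl((B,+)/H\bigr),\qquad b\longmapsto \bigl(a+H\mapsto \lambda_b(a)+H\bigr).
\]
This map is well defined because each \(\lambda_b\) is an automorphism of \((B,+)\) and hence preserves the characteristic subgroup \(H\). It is a homomorphism because \(\lambda\) is. By hypothesis \(\Aut((B,+)/H)\) is solvable, so \(\bar\lambda((B,\cdot))\) is solvable. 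Hence there is some \(j\) with \((B,\cdot)^{(j)}\le \ker\bar\lambda=:K\). In other words, for \(b\in K\) we have \(\lambda_b(a)-a\in H\) for all \(a\).

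Next I will compare the multiplicative and additive structure modulo \(H\) on \(K\). For \(b\in K\) and any \(a\), we have \(b\cdot a=b+\lambda_b(a)\equiv b+a \pmod H\) in the additive sense, since \(H\) is normal in \((B,+)\). So the map \(\pi:(B,\cdot)\supseteq K\to (B,+)/H\), \(b\mapsto b+H\), satisfies
\[
\pi(b\cdot c)=b+\lambda_b(c)+H=b+c+H=\pi(b)+\pi(c)
\]
for \(b,c\in K\). Provided \(K\) is a multiplicative subgroup, which holds because it is a kernel, this makes \(\pi\) a group homomorphism from \((K,\cdot)\) to \((B,+)/H\). Its kernel is \(K\cap H\), and its image is a subgroup of the solvable group \((B,+)/H\), which is a quotient of a solvable group. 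Therefore \((K,\cdot)/(K\cap H)\) is solvable, so \(K^{(i)}\le H\) for some \(i\).

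Combining the two steps, \((B,\cdot)^{(j+i)}\le K^{(i)}\le H\), and I take \(k=j+i\), or \(k=\max(1,j+i)\) to ensure \(k>0\). The step I expect to need the most care is the identity \(\pi(b\cdot c)=\pi(b)+\pi(c)\): it requires the congruence \(\lambda_b(c)\equiv c\) modulo \(H\) for \(b\in K\) together with normality of \(H\) in \((B,+)\), so that adding \(b\) on the left respects cosets. I also need to check that \(H\) is a set of the right kind for \(K\cap H\) to be the kernel. Since \(H\) is a left ideal, \((H,\cdot)\) is a subgroup, and \(\pi(b)=0\) holds exactly when \(b\in H\), so the kernel is \(K\cap H\).
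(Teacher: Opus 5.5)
Your proof is correct, and it takes a genuinely different route from the paper. The paper gives no argument of its own: it notes that $(B,+)^{(n)}\le H$ for some $n$ and then invokes an external comparison result (Corollary~3.2 of reference [21]) to get $(B,\cdot)^{(n+m)}\le H$.

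You prove the needed fact directly, in two steps:
\begin{enumerate}[label=(\roman*)]
\item The induced action $\bar\lambda\colon(B,\cdot)\to\Aut((B,+)/H)$ has solvable image, so its kernel $K$ contains $(B,\cdot)^{(j)}$ for some $j$.
\item On $K$, the identity $b\cdot c=b+\lambda_b(c)$ and the congruence $\lambda_b(c)\equiv c$ modulo the normal subgroup $H$ make $b\mapsto b+H$ a homomorphism $(K,\cdot)\to(B,+)/H$ with kernel $K\cap H$. Its image lies in the solvable group $(B,+)/H$, so $K^{(i)}\le H$ for some $i$.
\end{enumerate}
Both steps check out, including the coset computation $b+\lambda_b(c)+H=b+c+H$ and the monotonicity $(B,\cdot)^{(j+i)}\le K^{(i)}$.

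The paper's route is a one-line appeal to the literature. Yours is elementary and self-contained, and it makes the bound explicit: $k$ can be taken to be the derived length of $\Aut((B,+)/H)$ plus that of $(B,+)/H$. This matches the $n+m$ shape of the cited result, and is presumably what that result does internally.

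Your argument also shows the hypotheses can be weakened:
\begin{itemize}
\item Only the solvability of the quotient $(B,+)/H$ is used, not that of $(B,+)$.
\item Finiteness plays no role.
\item Characteristic is used only so that each $\lambda_b$ descends to $(B,+)/H$. Any left ideal $H$ would do, since $\lambda_b(H)\le H$ and $\lambda_{b^{-1}}(H)\le H$ give $\lambda_b(H)=H$.
\end{itemize}

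One cosmetic point: your opening sentence announces a quotient skew brace and an induction on derived length. Neither appears in the actual argument, which is a direct two-step proof, so that sentence should be removed.
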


\begin{proof}
Since $(B,+)$ is solvable, there is $n\in \mathbb N_{>0}$ such that $(B,+)^{(n)}\leq H$. By \cite[Corollary~3.2]{21} there is $m\in \mathbb N_{>0}$ such that $(B,\cdot)^{(n+m)}\leq H$.
\end{proof}

\section{Proof of Theorem \ref{ThA}} \label{sec:thmA}

\begin{proof}[Proof of Theorem \ref{ThA}]
Suppose, for contradiction, that the statement is false, and let $B$ be a counterexample. Write $|B|=2^\alpha m,$
with $\alpha\in \mathbb N_{>0}$ and $m\in \mathbb N_{>0}$ odd. Since $(B,+)$ is supersolvable, \cite[Theorem~9.1]{18} ensures the existence of a characteristic subgroup $H\leq (B,+)$ of order $m$. Hence $(H,\cdot)$ is a $2$-complement of $(B,\cdot)$. Write $(B,\cdot)=G_1\times G_2,$
where $G_{1} \simeq S$ and $G_{2} \simeq T$. Let $i \in \{1,2\}$. Since each $G_i$ is normal in $(B,\cdot)$, we have $G_iH\leq (B,\cdot)$, and
$[G_i:G_i\cap H]=[G_iH:H]$ is a power of $2$. If \(G_i\leq H\) for some \(i\), then \(G_i\) has odd order. By the
Feit--Thompson theorem (see the main Theorem of \cite{FeitThompson1963}), \(G_i\) is solvable, a contradiction. Therefore $G_j\nleq H$ for every $j$, and so $G_j$ has a subgroup of index a power of $2$. Since $G_j\cap H\leq H$, it follows that $G_j\cap H$ is a $2$-complement of $G_j$. Thus \(S\) and \(T\) are non-abelian simple groups admitting a
\(2\)-complement. By \cite[Lemma~2.3]{16}, this implies that
\[
S\cong \PSL_n(q),
\qquad\text{where } \frac{q^n-1}{q-1} = 2^{t} 
\]
for some $t \in \mathbb{N}$ and 
\[
T\cong \PSL_m(q'),
\qquad\text{where } \frac{(q')^m-1}{q'-1} = 2^{t'}
\]
for some \(t'\in \mathbb N\). By \cite[Corollary~3.3]{9}, $(B,\cdot)$ has a section isomorphic to $\operatorname{PSL}_{2}(7)$. By Lemma \ref{subquotient} we can assume  without loss of generality that $S$ also has a section isomorphic to $\operatorname{PSL}_{2}(7)$. Write $q=p^f$ for a prime $p$ and a positive integer $f$. Reducing modulo $2$ we see that $p$ has to be odd. Dividing both sides by $p^f-1$, we obtain
\[
\frac{p^{fn}-1}{p^f-1}=2^t.
\]
The left-hand side is a geometric series of $n$ terms with ratio $p^f$:
\[
(p^f)^{n-1}+(p^f)^{n-2}+\cdots+p^f+1=2^t.
\]
Since $p^f$ is odd, each term of the sum is odd. Hence, in order for the sum to be a power of $2$, the number of terms $n$ must be even. Let $n=2k$. Then
\[
p^{2fk}-1=(p^{fk}-1)(p^{fk}+1),
\]
so that the equation becomes
\[
\frac{(p^{fk}-1)(p^{fk}+1)}{p^f-1}=2^t
\qquad\Longrightarrow\qquad
\left(\frac{p^{fk}-1}{p^f-1}\right)(p^{fk}+1)=2^t.
\]
For the product of two integers to be a power of $2$, both factors must themselves be powers of $2$. In particular, we must have
\[
2^s-p^{fk}=1
\]
for some integer $s\leq t$. By the Catalan--Mih\u{a}ilescu theorem (see the main theorem of \cite{26}) we must have $fk=1$. Hence $f=1$, $k=1$, and $n=2$. Hence
\[
S\cong \PSL_2(p).
\]
Similarly, writing $q'=r^{f'}$ we get:
\[
T\cong \PSL_2(r),
\]
where \(r+1\) is a power of \(2\). Now let \(K\leq S\) and \(N\trianglelefteq K\) be such that
\[
K/N\cong \operatorname{PSL}_{2}(7).
\]
Since $\operatorname{PSL}_{2}(7)$ is non-solvable, the group $K$ is also non-solvable. By the classification of the subgroups of $\PSL_2(q)$ (see \cite[Theorem~8.27]{18}), it follows that $K$ must itself be simple, and hence $N=1$. Therefore,
\[
\operatorname{PSL}_{2}(7)\leq \PSL_2(p).
\]
Applying \cite[Theorem~8.27]{18} once again, we conclude that $p=7$. Thus $S \simeq \operatorname{PSL}_{2}(7)$. We distinguish two cases separately:

\medskip

\noindent
\textbf{The case r=7.}
In this case 
\[
(B,\cdot)\cong \operatorname{PSL}_2(7)\times \operatorname{PSL}_2(7),
\]
Since $|\operatorname{PSL}_{2}(7)|=2^3\cdot 3\cdot 7$, we have $v_3(|B|)=2$. Assume that
\[
3\mid [(B,+):(B,+)'].
\]
Then by \cite[Lemma~2.1]{16} there exists a characteristic subgroup $K$ of $(B,+)$ such that
\[
(B,+)/K\cong C_3^m,
\qquad m\in \{1,2\}.
\]
Hence $(B,\cdot)$ admits a subgroup of index $3^m$, which is impossible by Lemma \ref{index}. Therefore
\[
3\nmid [(B,+):(B,+)'].
\]
Thus every Sylow $3$-subgroup of $(B,+)$ is contained in $(B,+)'$. By \cite[Theorem~9.1]{18}, the derived subgroup $(B,+)'$ is nilpotent. Hence $(B,+)$ has a normal Sylow $3$-subgroup. Let $P_3$ denote the Sylow $3$-subgroup of $(B,+)$. Since \(P_3\) is the unique Sylow \(3\)-subgroup of \((B,+)\), it is characteristic in \((B,+)\). Therefore its centralizer $C_{(B,+)}(P_3)$ is also characteristic in $(B,+)$. Thus, by \cite[X.19, Corollary, p.~339]{19},
\[
(B,+)/C_{(B,+)}(P_3)\leq \Aut(P_3).
\]
We now show that $\Aut((B,+)/C_{(B,+)}(P_3))$ is solvable. If $P_3$ is cyclic, then
\[
\Aut(P_3)\cong \Aut(C_9)\cong C_6,
\]
and therefore $(B,+)/C_{(B,+)}(P_3)$ is cyclic and $\Aut((B,+)/C_{(B,+)}(P_3))$ is solvable. Suppose instead that
\[
P_3\cong C_3\times C_3.
\]
Then
\[
\Aut(P_3)\cong \GL_2(3).
\]
Thus $(B,+)/C_{(B,+)}(P_3)$ is a subgroup of $\GL_2(3)$. Using GAP \cite{15}, one checks that $\Aut(H)$ is solvable for every subgroup $H\leq \GL_2(3)$. Thus $\Aut((B,+)/C_{(B,+)}(P_3))$ is solvable. In any case, by Lemma \ref{Key} there exists $k\in \mathbb N$ such that
\[
(B,\cdot)^{(k)}\leq C_{(B,+)}(P_3).
\]
Since $(B,\cdot)$ is perfect, we conclude that
\[
C_{(B,+)}(P_3)=B,
\]
which implies that $P_3\leq Z(B,+)$. Thus $P_3$ is abelian. By \cite[Theorem~14.2]{18} we have
\[
P_3\cap (B,+)'\cap Z(B,+)=1.
\]
Since $P_3\leq (B,+)'$, it follows that
\[
P_3\cap Z(B,+)=1,
\]
which contradicts the fact that $P_3$ is central.

\medskip

\noindent
\textbf{The case r > 7.}
Let \(R\) be a Sylow \(r\)-subgroup of \((B,+)\). Since \(r>7\) and \(r+1\)
is a power of \(2\), the prime \(r\) is the largest prime divisor of
$|B|$, and it divides \(|B|\) exactly once. Hence $|R|=r.$ Since \((B,+)\) is supersolvable, its Sylow subgroup corresponding to the largest prime divisor of \(|B|\) is normal. Thus $R$ is a characteristic subgroup of $(B,+)$. We claim that
\[
R\leq (B,+)'.
\]
Suppose not. Then \(r \mid |(B,+)/(B,+)'|\).  By \cite[Lemma~2.1]{16} there is a characteristic \(K\) of \((B,+)\) such that
\[
(B,+)/K\cong C_r.
\]
Since \(K\) is characteristic in \((B,+)\), it is invariant under all
\(\lambda_b\), \(b\in B\). Hence \(K\) is a left ideal of \(B\). Therefore $(K,\cdot)$
is a subgroup of \((B,\cdot)\) of index \(r\).
But $G:=\operatorname{PSL}_2(7)\times \operatorname{PSL}_2(r)$
has no subgroup of index \(r\). Suppose otherwise, and let \(M\leq G\)
with \([G:M]=r\). Suppose, by contradiction, that \(M\leq G\) and \([G:M]=r\). Write
\[
S=\operatorname{PSL}_2(7),\qquad T=\operatorname{PSL}_2(r).
\]
Identifying \(S\) and \(T\) with \(S\times 1\) and \(1\times T\), respectively, we have
\[
[S:S\cap M]=[SM:M]
\]
and
\[
[T:T\cap M]=[TM:M].
\]
Both indices divide \([G:M]=r\). Hence each of them is either \(1\) or \(r\).
They cannot both be \(1\), because otherwise \(S\leq M\) and \(T\leq M\), and
therefore \(G=ST\leq M\), contradicting \(M<G\).
Thus either \(S\) has a subgroup of index \(r\), or \(T\) has a subgroup of
index \(r\). This is impossible. Indeed, \(S=\operatorname{PSL}_2(7)\) has order \(168\), so it
has no subgroup of index \(r>7\). On the other hand, by the classification of
the subgroups of \(\operatorname{PSL}_2(r)\) (see \cite[Theorem~8.27]{18}), the smallest index of a proper subgroup of
\(\operatorname{PSL}_2(r)\) is \(r+1\). Hence \(T=\operatorname{PSL}_2(r)\) has no subgroup of index \(r\).
This contradiction proves that \(G\) has no subgroup of index \(r\).  Therefore no such \(K\) exists, and we have proved that $R\leq (B,+)'.$ Now \(R\) is characteristic in \((B,+)\). Hence the lambda action induces a homomorphism
\[
\lambda_R: (B,\cdot) \longrightarrow \operatorname{Aut}(R), \qquad b \mapsto \lambda_{b}|_{R}.
\]
Since \(R\cong C_r\), we have $\operatorname{Aut}(R)\cong C_{r-1},$ which is abelian. On the other hand, \((B,\cdot)\) is perfect.
Hence every homomorphism from \((B,\cdot)\) to an abelian group is trivial. Therefore $\lambda_b(a)=a$
for every \(b\in B\) and every \(a\in R\). Thus
\[
b\cdot a=b+a
\]
for every \(b\in B\) and every \(a\in R\). Applying the same argument to the opposite skew brace, we also obtain
\[
b\cdot a=a+b
\]
for every \(b\in B\) and every \(a\in R\). Hence
\[
b+a=a+b
\]
for all \(b\in B\) and all \(a\in R\). Therefore
$R\leq Z(B,+).$ By \cite[Theorem~14.2]{18} we have
\[
R \cap (B,+)'\cap Z(B,+)=1.
\]
Since $R\leq (B,+)'$, it follows that
\[
R\cap Z(B,+)=1,
\]
which contradicts the fact that $R$ is central.
Therefore \((B,+)\) cannot be supersolvable.

\end{proof}

\section*{Acknowledgements}
The author is grateful to Andrea Loi and Matteo Bechere for several useful
discussions.

\end{document}